\pdfoutput=1
\input glyphtounicode
\ifdefined\pdfinterwordspaceon
  \pdfinterwordspaceon
\fi

\documentclass[justified,notoc,twoside]{tufte-handout}
\setcitestyle{numbers,square}

\usepackage[T1]{fontenc}
\usepackage[utf8]{inputenc}
\usepackage[english]{babel}
\usepackage[tracking]{microtype}
\usepackage[strict]{changepage}

\usepackage{amsthm}
\usepackage{thmtools}
\usepackage{hyperref}
\usepackage{xurl}

\declaretheorem[name=Theorem,numberwithin=section]{thm}
\declaretheorem[name=Lemma,sibling=thm]{lem}
\declaretheorem[name=Corollary,sibling=thm]{cor}
\declaretheorem[style=definition,name=Definition,numbered=no]{defn}

\usepackage{beton}
\usepackage{sectsty}
\allsectionsfont{\fontfamily{LinuxBiolinumT-OsF}\selectfont}
\usepackage{amsmath}
\usepackage{amssymb}
\usepackage{mathtools}
\usepackage{cleveref}
\usepackage[osf,sc]{mathpazo}

\crefname{thm}{theorem}{theorems}
\Crefname{thm}{Theorem}{Theorems}
\crefname{lem}{lemma}{lemmas}
\Crefname{lem}{Lemma}{Lemmas}
\crefname{cor}{corollary}{corollaries}
\Crefname{cor}{Corollary}{Corollaries}

\AtBeginDocument{\fontsize{12}{14}\selectfont}

\hypersetup{
  colorlinks=true,
  linkcolor=black,
  citecolor=black,
  urlcolor=black
}

\newcommand{\Ftwo}{\mathbb F_2}
\newcommand{\Om}{\Omega}
\newcommand{\card}[1]{\lvert #1\rvert}
\newcommand{\set}[1]{\left\{#1\right\}}
\newcommand{\pair}[2]{\left\langle #1,#2\right\rangle}

\title[GP III.1]
{Graph Puzzles III.1:\\
A proof of Sabidussi's compatibility conjecture}

\date{}

\hypersetup{
  pdftitle={Graph Puzzles III.1: A proof of Sabidussi's compatibility conjecture},
  pdfauthor={Nikolay Ulyanov},
  pdfsubject={Compatible circuit decompositions of Eulerian multigraphs},
  pdfkeywords={Eulerian multigraph, circuit decomposition, transition system, edge-colouring}
}

\begin{document}

\maketitle

\noindent
Nikolay Ulyanov\textsuperscript{*}\hfill
\href{mailto:ulyanick@gmail.com}{\textsuperscript{*}\nolinkurl{ulyanick@gmail.com}}

\begin{abstract}
\noindent
\newthought{Abstract: }We prove Sabidussi's compatibility conjecture. Let $G$ be a finite connected multigraph in which every vertex has even degree and the minimum degree is at least four, and let $T$ be a closed trail that traverses every edge exactly once. The edges of $G$ can be partitioned into circuits (connected 2-regular subgraphs) so that no circuit contains the two edges used consecutively anywhere in $T$. In fact, the edges can be four-coloured so that every such pair receives two different colours and the subgraph formed by the edges of each colour has even degree at every vertex.
\end{abstract}

\noindent\rule{\linewidth}{0.4pt}

\section{Introduction}\label{sec:introduction}

All graphs in this paper are finite multigraphs; loops and parallel edges
are allowed.  We regard an edge as having two distinct \emph{half-edges},
each incident with a vertex, so that a loop contributes two to the degree
of its vertex.  A graph is \emph{even} if every vertex has even degree and
\emph{Eulerian} if it is connected and even.  A \emph{circuit} is a
nonempty connected $2$-regular subgraph, and a \emph{circuit decomposition}
is a partition of the edge set into circuits.

A transition at a vertex is an unordered pair of distinct half-edges
incident with that vertex.  A transition system pairs the half-edges at
each vertex.  Every circuit pairs its two half-edges at each vertex that it
meets; it is \emph{compatible} with a transition system if none of these
pairs is a prescribed transition.  A circuit decomposition is compatible
if each of its circuits is compatible.  An Euler tour induces a transition
system by pairing, at each passage through a vertex, the half-edge on which
the tour arrives with the half-edge on which it departs.

We prove the following strengthening of Sabidussi's conjecture.

\begin{thm}\label{thm:sabidussi}
Let \(G\) be a finite Eulerian multigraph with minimum degree
\(\delta(G)\ge4\), and let
\(
  T=(e_0,e_1,\ldots,e_{m-1})
\)
be an Euler tour of \(G\), with indices read modulo \(m\).  There is a
colouring \(\chi:E(G)\to\Ftwo^2\) such that
\begin{enumerate}
\item \(\chi(e_{i-1})\ne\chi(e_i)\) for every \(i\);
\item every vertex has even degree in each colour class.
\end{enumerate}
Consequently, \(G\) has a circuit decomposition compatible with the
transition system induced by \(T\).
\end{thm}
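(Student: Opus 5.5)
The final assertion is the easy part, so I would dispose of it first. Suppose $\chi$ satisfies (1) and (2). Each colour class $\chi^{-1}(c)$, $c\in\Ftwo^2$, is an even graph. It therefore splits into circuits, for example by repeatedly removing a cycle. Take the union of these decompositions over the four colours. Every circuit obtained is monochromatic. At a vertex it meets, such a circuit pairs two half-edges of the same colour, and by (1) a prescribed transition $\{e_{i-1},e_i\}$ joins two edges of different colours. So no circuit uses a prescribed transition, and the decomposition is compatible.

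For the colouring I would write $\chi=(x,y)$, where $x,y$ lie in the cycle space $Z(G)\subseteq\Ftwo^{E}$; condition (2) says exactly this. Condition (1) says that for every $i$ at least one of $x(e_{i-1})+x(e_i)$ and $y(e_{i-1})+y(e_i)$ is nonzero.

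The first step is a reduction to small degrees, using the tour $T$ to split vertices. Let $v$ have degree $2k\ge8$. Choose two passages of $T$ through $v$, that is, two transitions, and detach their four half-edges onto a new vertex $v'$. The tour is still an Euler tour of the new graph $G'$, with the same consecutive pairs. Both $v$ and $v'$ now have degree at least $4$, and $G'$ is connected because $T$ still traverses it.

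A colouring of $G'$ satisfying (1) and (2) pulls back to $G$. Condition (1) is unchanged. For (2), the colour-$c$ degree of $v$ in $G$ is the sum of those of $v$ and $v'$ in $G'$, hence even. Splitting cannot create a vertex of degree $2$, and it must not: at a degree-$2$ vertex both edges would need the same colour, contradicting (1). So by induction on $\sum_v(\deg v-4)$ I may assume every vertex has degree $4$ or $6$.

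Now take a minimal counterexample $(G,T)$ with all degrees in $\{4,6\}$, minimal in $\card{E(G)}$.
\begin{itemize}
\item At a degree-$4$ vertex with transitions $\{a,b\}$ and $\{c,d\}$, conditions (1) and (2) together force $\chi(a)\ne\chi(b)$ and $\{\chi(a),\chi(b)\}=\{\chi(c),\chi(d)\}$ as multisets. Hence $\chi(c)=\chi(a)$ and $\chi(d)=\chi(b)$, or the reverse.
\item Using this, I would try to contract such a vertex. Replace it by the two non-transitions $\{a,c\}$ and $\{b,d\}$, smoothing them into edges $ac$ and $bd$. The first option for the local colouring is recorded by giving $ac$ and $bd$ the common colours.
\item The Euler tour of $G$ becomes a tour of the smaller graph after reversing the segment of $T$ between the two passages. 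This is a Kotzig transformation, and in the interlace graph of $T$ it acts as a local complementation.
\end{itemize}
The difficulty is that smoothing merges consecutive pairs, so new forbidden equalities appear and some vertices may drop to degree $2$. I would handle this by choosing which degree-$4$ vertex to contract according to the interlace graph, and by keeping both smoothings in play.

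If the purely inductive route stalls, my fallback is to count. Write the number of valid pairs $(x,y)\in Z(G)^2$ as a character sum over subsets $S$ of the consecutive pairs of $T$, with each term equal to $\pm\card{Z\cap W_S}^2$ for a suitable subspace $W_S$. Then show that the $S=\emptyset$ term dominates, using $\delta\ge4$ and the reduction to degrees $4$ and $6$.

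I expect this core step, degrees in $\{4,6\}$, to be the main obstacle. Degree-$6$ vertices admit far more local colour patterns than degree-$4$ ones. Controlling the tour after Kotzig transformations, or bounding the error terms in the character sum, is where the real work will lie.
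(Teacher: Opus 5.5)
\textbf{There is a genuine gap: the case with all degrees in $\{4,6\}$ is never proved, and that case is the whole conjecture.}

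Your derivation of the circuit decomposition from the colouring is correct. So is the splitting reduction to degrees $4$ and $6$; this is Fleischner's lemma, which the paper cites but bypasses. Neither of your routes closes the remaining case.

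\emph{The smoothing induction fails as stated.} After $v$ is replaced by $ac$ and $bd$ and the segment is reversed, the new consecutive pairs at the far ends of $ac$ and $bd$ reproduce the old constraints there. The constraint at $v$ itself, $\chi(ac)\ne\chi(bd)$, is not a transition of the smaller instance. So a colouring of the smaller graph need not lift, and you would need an induction hypothesis that carries extra forbidden pairs. The other local option corresponds to smoothing $\{a,d\},\{b,c\}$, which splits $T$ into two closed trails. In that class compatible decompositions can fail: $K_5$ with the transitions of two Hamiltonian circuits is an example.

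\emph{The counting fallback fails by dominance.} For each consecutive pair, the functional $x\mapsto x(e_{i-1})+x(e_i)$ is nonzero on $Z(G)$, because two consecutive tour edges cannot form an edge cut when $\delta\ge4$. So every singleton $S$ contributes $-\card{Z(G)}^2/4$. For $m>4$ these first-order terms alone outweigh the $S=\varnothing$ term, and positivity would need exactly the cancellation argument that is missing.

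\textbf{The missing idea is the paper's change of variables along the tour, followed by a count modulo $2$.}
\begin{itemize}
\item Put $x_i=\chi(e_i)$ and $y_i=x_{i-1}+x_i$. Let $v_i$ be the vertex where $T$ passes from $e_{i-1}$ to $e_i$, and let $I_a=\{i:v_i=a\}$. The half-edges at $a$ then carry the colours $x_{i-1},x_i$ for $i\in I_a$, and condition (1) becomes $y_i\in\Om$.
\item An even-size family in $\Ftwo^2$ has all multiplicities even if and only if both its sum and the sum of $q(x)=x_1x_2$ over it vanish.
\item Hence evenness at $a$ becomes $\sum_{i\in I_a}y_i=0$ together with a quadratic condition. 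By polarization the quadratic condition reduces to $\sum_{b\ne a}\sum_{i\in I_a,\,j\in I_b,\,j<i}\pair{y_i}{y_j}=0$.
\item Restrict each vertex $a$ to three zero-sum patterns indexed by $t\in\Om$. If $\card{I_a}$ is even the pattern is constant $t$. Otherwise it is the same but with the last two values replaced by $\rho t,\rho^2t$, where $\rho(r,s)=(s,r+s)$.
\item The cross terms become functions $\beta_{ab}(t_a,t_b)$ with $\beta_{ab}(t,s)=\beta_{ba}(s,t)$ and zero row sums.
\item A mod-$2$ expansion shows that the number of assignments $a\mapsto t_a$ killing all cross terms is odd. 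Prefix sums of the $y_i$ then recover $\chi$.
\end{itemize}
This handles every even degree at once, so it needs neither the reduction to degrees $4$ and $6$ nor any induction. It counts modulo $2$ rather than by size.
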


For the following consequence, we use the standard bounded-cover
terminology: a \emph{cycle} is an even subgraph and need not be connected,
and a \(k\)-\emph{cycle double cover} is a family of at most \(k\) cycles in
which every edge occurs exactly twice.

The cycle double cover conjecture was recently proved by OpenAI
\citep{OpenAI2026CDC}.  Under the convention above, their construction proves
the  \(8\)-cycle double cover theorem: the eight even subgraphs
indexed by \(\Ftwo^3\) cover every edge exactly twice.  In the presence of a
prescribed dominating circuit, \cref{thm:sabidussi} improves the bound and
retains that circuit as a member.

\begin{cor}\label{cor:dominating-circuit}
Let \(H\) be a finite cubic graph and let \(C\) be a dominating circuit of
\(H\).  Then \(H\) has a \(5\)-cycle double cover with \(C\) as a member.
\end{cor}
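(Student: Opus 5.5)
We derive the corollary from \cref{thm:sabidussi} by contracting the dominating circuit. Since \(C\) is dominating, every edge of \(H\) not in \(C\) has at least one end on \(C\).

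\textbf{Step 1: build \(G\) and \(T\).} Form \(G\) from \(H\) by contracting \(C\) to a single vertex \(c\), after first handling the vertices off \(C\). Each vertex \(v\notin V(C)\) has degree \(3\) and all of its neighbours lie on \(C\), since every edge at \(v\) must meet \(C\). Each vertex of \(C\) has exactly one edge (its chord) leaving \(C\). I will suppress each vertex \(v\notin V(C)\) by splitting its three incident edges into pairs of half-edges attached to \(c\).

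A cleaner route is to take \(G\) to be the multigraph on vertex set \(V(H)\setminus V(C)\) together with \(c\), with edge set \(E(H)\setminus E(C)\), where every edge meeting \(C\) is redirected to \(c\). Chords with both ends on \(C\) then become loops at \(c\). Each vertex \(v\) off \(C\) has degree \(3\), which is odd, so this is not yet even.

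To fix parity, I instead encode the structure as a transition system. Take \(G=H/C\) as above and double the edges appropriately, or better, work directly with the standard reduction: the Euler tour \(T\) at \(c\) is obtained by walking along \(C\) and, at each vertex of \(C\), leaving by the chord and returning. Concretely, I replace each off-\(C\) vertex \(v\) (degree 3) together with \(C\) by the 4-regular structure obtained by identifying the endpoints appropriately. The target is an Eulerian graph \(G\) of minimum degree \(\ge 4\) whose Euler tour \(T\) traverses the chords in the cyclic order given by \(C\). Consecutive tour edges then correspond exactly to pairs of chords at adjacent positions of \(C\).

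\textbf{Step 2: apply the theorem.} \Cref{thm:sabidussi} gives a colouring \(\chi:E(G)\to\Ftwo^2\) with even colour classes, in which consecutive tour edges receive different colours. Pull \(\chi\) back to the chords \(E(H)\setminus E(C)\). Since every vertex of \(H\) off \(C\) is cubic and each colour class is even in \(G\), the three edges at each such \(v\) receive three distinct colours.

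\textbf{Step 3: extend to \(C\).} The non-\(C\) edges at the vertices of \(C\) carry colours that change around \(C\). I extend \(\chi\) to \(E(C)\) by giving each edge of \(C\) the value needed for parity, namely the sum of chord colours along an arc. This produces an \(\Ftwo^2\)-flow in which every edge, including those of \(C\), has a nonzero label; this is the step where condition (1) is used. The result is a nowhere-zero \(\Ftwo^2\)-flow, that is, a \(3\)-cycle cover.

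The three nonzero values \(\alpha\in\Ftwo^2\setminus\{0\}\) give cycles \(D_\alpha=\chi^{-1}(\{\alpha,\alpha'\})\), where \(\alpha'\) ranges over one of the complementary values. The standard construction takes the sets \(\chi^{-1}(\{\beta:\beta_i=1\})\) for the three linear functionals on \(\Ftwo^2\), which cover every edge exactly twice.

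\textbf{Step 4: reach a 5-CDC containing \(C\).} To have \(C\) itself as a member, I instead use the chord colouring only: the off-\(C\) edges with the colours from Step 2, plus \(C\) covered by a combination. The members are \(C\) together with the even subgraphs \(\chi^{-1}(\alpha)\cup P_\alpha\), where \(P_\alpha\subseteq C\) is the parity-completing path set. I will choose the four members so that each edge of \(C\) lies in exactly one of them besides \(C\) itself. Chords then lie in exactly two members, using an \(\Ftwo^2\)-count in which each nonzero colour pairs appropriately.

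The main obstacle is this final bookkeeping: making the cover exactly double on both \(C\) and the chords with only \(1+4=5\) members. Condition (1), that adjacent chords have different colours, is what forces each \(C\)-edge to be covered exactly once by the \(P_\alpha\)'s, so I will check that counting carefully.
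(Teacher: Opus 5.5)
Your proposal has a genuine gap. You never pin down $G$ and $T$, and the version you aim for swaps the roles of the two edge sets. Contracting $C$, or asking for a tour that runs through the non-$C$ edges in the cyclic order of $C$, leads you to colour the edges outside $C$. The object that should be coloured is $E(C)$ itself.

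The later steps then fail concretely. Steps 2--3 would give a nowhere-zero $\Ftwo^2$-flow on all of $H$, which for a cubic graph is a $3$-edge-colouring. That claim is false in general. The Petersen graph is not $3$-edge-colourable, yet it has a dominating circuit: a $9$-cycle through all vertices but one, which exists because it is hypohamiltonian. Step 4 fails on its own terms. If each non-$C$ edge $e$ carries one colour $\chi(e)$, it lies in exactly one of the sets $\chi^{-1}(\alpha)\cup P_\alpha$ and not in $C$. So it is covered once rather than twice, and no choice of $P_\alpha\subseteq E(C)$ changes that. The ``final bookkeeping'' you flag cannot be done in this setup.

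The missing idea is to contract each component of $H-E(C)$ rather than $C$. Since $C$ is dominating, each such component is one of two kinds:
\begin{itemize}
\item a single chord $uv$ with $u,v\in V(C)$, which becomes a vertex of degree $4$;
\item a claw whose centre lies off $C$ and whose three leaves lie on $C$, which becomes a vertex of degree $6$.
\end{itemize}
Then $E(G)=E(C)$, $\delta(G)\ge4$, and $C$ itself is the Euler tour $T$, so \cref{thm:sabidussi} colours $E(C)$.

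The lift back to $H$ works as follows. At each $u\in V(C)$ the two $C$-edges are consecutive in $T$, so they carry distinct colours $\alpha_u\ne\beta_u$. Put the unique non-$C$ edge at $u$ into the lifts of colour classes $\alpha_u$ and $\beta_u$. Evenness at a contracted vertex means each colour lies in $0$ or $2$ of the pairs $\{\alpha_u,\beta_u\}$ of that component. For a chord $uv$ this forces $\{\alpha_u,\beta_u\}=\{\alpha_v,\beta_v\}$, so the rule is consistent. At a claw centre each lifted class has degree $0$ or $2$. Hence every lifted class is an even subgraph of $H$.

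Each $C$-edge then lies in exactly one lifted class and each other edge in exactly two. Adjoining $C$ gives the $5$-cycle double cover. Condition (1) of \cref{thm:sabidussi} is precisely what makes every non-$C$ edge covered twice rather than not at all.
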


\begin{proof}
Contract every component of \(H-E(C)\).  The image of \(C\) is an Euler tour
\(T\) of an Eulerian multigraph \(G\) whose degrees are either 4 or 6.  The
colour classes given by \cref{thm:sabidussi} partition \(E(G)\) into at most
four compatible even subgraphs.  Under the standard lift through the
contraction \citep[pp.~236--237]{Fleischner1984}, these become at most four
cycles of \(H\) that cover the edges of \(C\) once and the edges outside
\(C\) twice.  Adjoining \(C\) gives the required cover.
\end{proof}

The conjecture, due to Gert Sabidussi, was recorded by Fleischner
\citep{Fleischner1980}.  The study of forbidden transitions goes back to
Kotzig \citep{Kotzig1968}.  A compatible circuit decomposition need not
exist for an arbitrary transition system: the standard example is
\(K_5\) with the transition system induced by a decomposition into two
Hamiltonian circuits \citep{FleischnerEtAl2019}.  Fleischner proved the
planar case of Sabidussi's conjecture \citep{Fleischner1980}, and Fleischner
and Frank later established a more general planar decomposition theorem
\citep{FleischnerFrank1990}.  Fan and Zhang proved the
\(K_5\)-minor-free case \citep{FanZhang2000}; subsequent work developed the
corresponding theory of transition minors \citep{FleischnerEtAl2019}.
Fleischner also showed that it is enough to consider Eulerian graphs all of
whose degrees are either 4 or 6
\citep[Lemma~1, p.~236]{Fleischner1984}.  Our proof instead treats every
even degree directly.

The argument works with the stronger colouring in
\cref{thm:sabidussi}.  The successive passage vertices of the Euler tour
form a cyclic word whose gaps are the tour edges.  After passing from gap
colours \(x_i\) to their nonzero differences
\(y_i=x_{i-1}+x_i\), each letter has three natural local zero-sum patterns.
A parity lemma chooses one pattern at every letter so that all remaining
quadratic conditions vanish simultaneously.  Prefix sums then recover the
gap colours.  The two elementary parity facts used in this construction
are isolated in \cref{sec:parity}.

The proof has also been formalized in Lean. The formalization is available in the author’s GitHub repository:
\href{https://github.com/gexahedron/sabidussi-lean}
{\nolinkurl{https://github.com/gexahedron/sabidussi-lean}}.

\noindent\textbf{Statement of AI use.}
The proof in this paper is entirely due to GPT~5.6~Pro, and the writeup was
prepared with help from GPT~5.6~Sol.

\noindent\textbf{Acknowledgements.}
The author thanks Tom de Groot for valuable suggestions on the exposition
of the proof.

\section{Reduction to a cyclic word}\label{sec:reduction}

Fix \(G\) and \(T\) as in \cref{thm:sabidussi}.  Temporarily orient every
edge \(e_i\) in the direction in which the tour traverses it, and denote
its departure and arrival half-edges by \(e_i^-\) and \(e_i^+\),
respectively.  Let \(v_i\) be the vertex at which the tour passes from
\(e_{i-1}\) to \(e_i\).  Thus the transition at this passage is
\(\{e_{i-1}^+,e_i^-\}\).  Since \(\delta(G)\ge4\), the tour has at least
two edges, so its consecutive edges \(e_{i-1}\) and \(e_i\) are distinct.

For \(v\in V(G)\), put
\(
  I_T(v)=\set{i\in\{0,1,\ldots,m-1\}:v_i=v}.
\)
If \(H_G(v)\) denotes the set of half-edges incident with \(v\), then
\[
  \begin{split}
  I_T(v)\times\{\mathrm{in},\mathrm{out}\}&\longrightarrow H_G(v),\\
  (i,\mathrm{in})&\longmapsto e_{i-1}^+,\\
  (i,\mathrm{out})&\longmapsto e_i^-
  \end{split}
\]
is a bijection.  Indeed, a half-edge at \(v\) occurs exactly once in the
tour, either as some departure \(e_j^-\), in which case \(v_j=v\), or as
some arrival \(e_j^+\), in which case \(v_{j+1}=v\).  The two half-edges of
a loop remain distinct in this correspondence.  Consequently,
\begin{equation}\label{eq:degree-occurrences}
  \card{I_T(v)}=\frac{\deg_G(v)}2\ge2.
\end{equation}

\begin{defn}[Cyclic words]
A \emph{cyclic word} of positive length \(n\) on an alphabet \(A\) is a sequence
\[
  \mathbf w=(w_0,w_1,\ldots,w_{n-1}),
\]
read cyclically, in which every letter of \(A\) occurs.  Put
\[
  I_a=\set{i\in\{0,1,\ldots,n-1\}:w_i=a}
  \qquad(a\in A).
\]
Gap \(i\) lies immediately after \(w_i\); hence occurrence \(i\) is
incident with gaps \(i-1\) and \(i\).  When the same gap is incident twice,
the two incidences are counted separately.
\end{defn}

The combinatorial core of the proof is the following theorem.

\begin{thm}[Cyclic-word colouring]\label{thm:word-colouring}
Let \(\mathbf w\) be a cyclic word in which every letter occurs at least
twice.  Its gaps can be coloured by elements
\(x_0,x_1,\ldots,x_{n-1}\in\Ftwo^2\) so that
\begin{enumerate}
\item \(x_{i-1}\ne x_i\) for every \(i\);
\item for every letter \(a\) and every \(z\in\Ftwo^2\), the colour \(z\)
  occurs an even number of times among the gap incidences
  \((x_{i-1},x_i)_{i\in I_a}\).
\end{enumerate}
\end{thm}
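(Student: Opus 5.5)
I would follow the route sketched in the introduction: replace the gap colours by their successive differences, impose the linear part of condition~2 letter by letter through a local choice, and then deal with the remaining quadratic conditions by a global parity argument. First I would rewrite condition~2. For a multiset of elements of \(\Ftwo^2\) of even size, every colour occurs an even number of times exactly when two things hold.
\begin{itemize}
\item The sum of the elements is \(0\).
\item The number of occurrences of \((1,1)\) is even; equivalently, \(\sum q(x)=0\) with \(q(x)=x^{(1)}x^{(2)}\).
\end{itemize}
Here \(q\) is a quadratic form whose polar form \(B(u,y)=q(u+y)+q(u)+q(y)\) is the nondegenerate alternating form on \(\Ftwo^2\). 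The incidence multiset at \(a\) has \(2\card{I_a}\) elements, so the size condition is automatic.

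Next I would put \(y_i=x_{i-1}+x_i\). Condition~1 then says \(y_i\ne0\). The linear condition at \(a\) becomes \(\sum_{i\in I_a}y_i=0\). The quadratic condition becomes
\[
\sum_{i\in I_a}\bigl(B(x_{i-1},y_i)+q(y_i)\bigr)=0 .
\]
Conversely, given nonzero \(y_i\) with \(\sum_i y_i=0\) and any starting value \(x_{-1}=x_{n-1}\), the prefix sums \(x_i=x_{n-1}+y_0+\dots+y_i\) are consistent around the cycle and recover a gap colouring. So the task becomes: choose nonzero \(y_i\), with zero sum over each \(I_a\), such that the quadratic expression vanishes at every letter. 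Zero sum over each \(I_a\) gives the global closure \(\sum_i y_i=0\) for free.

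At a letter \(a\) with \(k=\card{I_a}\ge2\) occurrences, I would use three local zero-sum patterns, indexed by the nonzero elements \(c\in\Ftwo^2\setminus\{0\}\).
\begin{itemize}
\item If \(k\) is even, set \(y_i=c\) for all \(i\in I_a\).
\item If \(k\) is odd (so \(k\ge3\)), use a fixed rule that writes \(0\) as a sum of \(k\) nonzero vectors and depends equivariantly on \(c\). For example, make \(k-3\) of the \(y_i\) equal to \(c\) and the other three equal to the three nonzero vectors in an order rotated by \(c\).
\end{itemize}
The unknowns are then one element \(c_a\) of a \(3\)-element set per letter, together with the base point \(x_{n-1}\). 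Each quadratic condition \(Q_a\) is an explicit \(\Ftwo\)-valued function of these unknowns. Substituting prefix sums, \(B(x_{i-1},y_i)\) is a sum of terms \(B(y_j,y_i)\) over \(j<i\) plus \(B(x_{n-1},y_i)\). The last term sums to \(B(x_{n-1},0)=0\) over \(I_a\), so each \(Q_a\) depends only on the choices \(c_b\) and is built from pairwise terms \(B(y_j,y_i)\) read in cyclic order. There is also a global dependency: \(\sum_a Q_a\) counts every gap colour twice and so vanishes identically. Hence only \(\card{A}-1\) conditions are independent.

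The main obstacle is the parity lemma: showing that some assignment \((c_a)\) makes every \(Q_a\) vanish simultaneously. My first attempt would be a counting argument modulo \(2\). I would compute \(N=\sum_{(c_a)}\prod_a(1+Q_a)\) over all \(3^{\card{A}}\) assignments and show \(N\) is odd. To do this I would identify each \(3\)-element choice set with \(\Ftwo^2\setminus\{0\}\) and use the fact that \(B(c,c')\) depends only on whether \(c=c'\). Then the interaction between two letters \(a,b\) reduces to a parity determined by how their occurrences interleave in the cyclic word, times \([c_a\ne c_b]\). This should make the system a graph-colouring-type parity count that is controlled by an elementary fact such as ``the number of proper \(3\)-colourings with prescribed local parities is odd'' or ``a symmetric \(\Ftwo\)-matrix with the right diagonal has a solution''. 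These are presumably the two parity facts isolated in \cref{sec:parity}.

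If the direct count is messy, the fallback is induction on the number of letters. I would fix one letter \(b\), use the global dependency to discard \(Q_b\), and show that as \(c_b\) ranges over its three values the vector \((Q_a)_{a\ne b}\) changes by linear functionals whose three values sum to zero. One of the three choices then preserves solvability of the smaller system.
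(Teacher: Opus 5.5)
\textbf{There is a genuine gap.} Your setup matches the paper's up to the decisive step. That includes the rewriting of condition~2 as a zero linear sum plus a zero sum of $q$, the differences $y_i=x_{i-1}+x_i$, and three equivariant zero-sum patterns per letter; your odd-$k$ rule is the paper's $t,\dots,t,\rho t,\rho^2t$. It also includes prefix-sum integration and the plan to show that $N=\sum\prod_a(1+Q_a)$ is odd. But the oddness of $N$ is the entire global content of the theorem, and you do not prove it. What you offer in its place does not work.

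Your simplification says the interaction between $a$ and $b$ is an interleaving parity times $[c_a\ne c_b]$. This holds only when both letters have even multiplicity. At an odd letter the pattern takes the values $\rho c_a$ and $\rho^2c_a$ at two positions, and $B(\rho c_a,c_b)=[\rho c_a\ne c_b]$. So the interaction is a general function $\beta_{ab}$ on $\Om\times\Om$, mixing $[c_a\ne c_b]$, $[\rho c_a\ne c_b]$ and $[\rho^2c_a\ne c_b]$. Moreover, when every letter has even multiplicity, taking all $c_a$ equal already solves the system. So your simplification is valid only in the trivial case, and it discards exactly the odd letters where the difficulty lies. The ``elementary facts'' you then invoke are not precisely formulated, let alone proved.

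The fallback induction also fails as stated. The unknowns range over three-element sets, so the $Q_a$ are not linear functionals of them. Solvability of the smaller system says nothing about the three perturbed systems. To run the induction you would need an odd-count statement for systems with unary perturbations, which is the same lemma again.

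Three things are missing.
\begin{enumerate}
\item The within-letter terms collapse: $\sum_{i\in I_a}q(y_i)+\sum_{i,j\in I_a,\ j<i}B(y_i,y_j)=q\bigl(\sum_{i\in I_a}y_i\bigr)=0$. Hence $Q_a=\sum_{b\ne a}\beta_{ab}(c_a,c_b)$, where $\beta_{ab}(t,s)=\sum_{i\in I_a,\,j\in I_b,\,j<i}B(\Delta_{a,t}(i),\Delta_{b,s}(j))$, and there are no diagonal terms.
\item These interactions satisfy two identities. First, $\beta_{ab}(t,s)=\beta_{ba}(s,t)$: each pattern sums to zero over $I_a$, so the two orderings together contribute $B(0,0)=0$. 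Second, $\sum_{s\in\Om}\beta_{ab}(t,s)=0$: equivariance gives $\sum_{s\in\Om}\Delta_{b,s}(j)=0$ at every position $j$.
\item One needs an abstract lemma: for any symmetric, zero-row-sum interactions on three-element state sets, the number of solutions is odd. The paper proves this as follows.
\begin{enumerate}
\item Expand $\prod_{u\in S}Q_u$ as a sum over fixed-point-free maps $f\colon S\to U$.
\item A vertex of degree one in the functional digraph kills its term via the row sums, using symmetry first if that vertex is a source.
\item Degree counting then forces $f$ to be a fixed-point-free permutation of $S$.
\item A two-cycle contributes $\sum\beta_{uv}^2=\sum\beta_{uv}=0$.
\item Permutations whose cycles all have length at least three cancel against their inverses by symmetry.
\end{enumerate}
Only $S=\varnothing$ survives, so $N\equiv3^{\card U}\equiv1$.
\end{enumerate}
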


We next derive the graph theorem from this statement.  Since
\(\delta(G)>0\) and \(G\) is connected, every vertex occurs among the
passage vertices.  Thus
\(
  \mathbf w_T=(v_0,v_1,\ldots,v_{m-1})
\)
is a cyclic word on \(V(G)\), and it satisfies the hypothesis of
\cref{thm:word-colouring} by \cref{eq:degree-occurrences}.  Given the gap
colouring supplied by that theorem, define
\(
  \chi(e_i)=x_i.
\)
The two edges in the transition at passage \(i\) receive the distinct
colours \(x_{i-1}\) and \(x_i\).  Moreover, the displayed bijection
identifies the half-edges of colour \(z\) at \(v\) with the gap incidences
of colour \(z\) at occurrences of \(v\).
The second conclusion of \cref{thm:word-colouring} therefore says exactly
that every colour class is even.

It remains only to pass from colour classes to circuits.  Every finite even
multigraph has a circuit decomposition: take an Euler tour in each component
containing an edge and split it at repeated vertices until every resulting
closed trail is a circuit.  Apply this observation to the four colour
classes.  The resulting circuits partition \(E(G)\), and each is
monochromatic, whereas the two edges in every transition have different
colours.  The decomposition is therefore compatible with the transition
system induced by \(T\).  Thus \cref{thm:word-colouring} implies
\cref{thm:sabidussi}.

\section{Two parity lemmas}\label{sec:parity}

All calculations in this section take place in \(\Ftwo\).  For
\(x=(x_1,x_2)\) and \(y=(y_1,y_2)\) in \(\Ftwo^2\), put
\begin{equation}\label{eq:forms}
  \pair{x}{y}=x_1y_2+x_2y_1,
  \qquad
  q(x)=x_1x_2.
\end{equation}
The first form is bilinear, symmetric, and alternating, and
\begin{equation}\label{eq:polarization}
  q(x+y)=q(x)+q(y)+\pair{x}{y}.
\end{equation}
Iterating this identity gives, for every finite sequence
\(z_1,\ldots,z_r\in\Ftwo^2\),
\begin{equation}\label{eq:finite-polarization}
  q\left(\sum_{i=1}^r z_i\right)
  =
  \sum_{i=1}^r q(z_i)
  +
  \sum_{1\le j<i\le r}\pair{z_i}{z_j}.
\end{equation}

\begin{lem}[Four-colour parity]\label{lem:four-colour-parity}
Let \((z_i)_{i\in I}\) be a finite family in \(\Ftwo^2\), with
\(\card I\) even.  Every element of \(\Ftwo^2\) occurs an even number of
times among the \(z_i\) if and only if
\begin{equation}\label{eq:four-colour-conditions}
  \sum_{i\in I}z_i=0
  \qquad\text{and}\qquad
  \sum_{i\in I}q(z_i)=0.
\end{equation}
\end{lem}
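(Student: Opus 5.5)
We will prove the lemma by counting multiplicities. For \(z\in\Ftwo^2\), let \(n_z\) be the number of indices \(i\in I\) with \(z_i=z\). We write \(n_{00},n_{10},n_{01},n_{11}\), all read in \(\Ftwo\). The hypothesis \(\card I\) even gives the first relation
\[
  n_{00}+n_{10}+n_{01}+n_{11}=0 .
\]
The two coordinates of \(\sum_i z_i\) count, modulo 2, the indices whose first (respectively second) coordinate is \(1\). Hence \(\sum_i z_i=0\) amounts to two more relations:
\[
  n_{10}+n_{11}=0,\qquad n_{01}+n_{11}=0 .
\]
Since \(q(z)=z_1z_2\) equals \(1\) only at \(z=(1,1)\), we get \(\sum_i q(z_i)=n_{11}\). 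So the second condition in \cref{eq:four-colour-conditions} says exactly that \(n_{11}=0\).

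For the forward direction, assume every \(n_z\) is even. Then every one of these linear expressions vanishes, so both conditions in \cref{eq:four-colour-conditions} hold. For the converse, assume both conditions hold. Then \(n_{11}=0\). The two coordinate relations give \(n_{10}=n_{11}=0\) and \(n_{01}=n_{11}=0\). Finally, the parity of \(\card I\) gives \(n_{00}=n_{10}+n_{01}+n_{11}=0\). So every element of \(\Ftwo^2\) occurs an even number of times.

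The only point needing care is noticing where each hypothesis is used. The evenness of \(\card I\) is what controls the zero colour: \(q\) and the sum are both blind to \(z=0\). The quadratic condition is what separates \((1,1)\) from the pair \(\{(1,0),(0,1)\}\): the linear sum alone cannot distinguish one copy each of \((1,0)\) and \((0,1)\) from a single \((1,1)\). Once the four indicator counts are written down, the argument is just solving a triangular linear system over \(\Ftwo\), so I expect no real obstacle.
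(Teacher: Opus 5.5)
Your proof is correct and takes the same route as the paper's. You reduce the four multiplicities mod $2$, read off the relations coming from $\card{I}$ even, from the two coordinates of $\sum_i z_i$, and from $\sum_i q(z_i)=n_{11}$, and then solve the resulting triangular system over $\Ftwo$.
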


\begin{proof}
If all four multiplicities are even, both sums in
\cref{eq:four-colour-conditions} vanish.  Conversely, let
\(n_{rs}\in\Ftwo\) be the parity of the multiplicity of
\((r,s)\in\Ftwo^2\).  Since \(\card I\) is even,
\[
  n_{00}+n_{10}+n_{01}+n_{11}=0.
\]
The two coordinates of the first equation in
\cref{eq:four-colour-conditions} give
\[
  n_{10}+n_{11}=0,
  \qquad
  n_{01}+n_{11}=0.
\]
Finally, \(q\) takes the value one only at \((1,1)\), so the second
equation gives \(n_{11}=0\).  Hence all four multiplicities are even.
\end{proof}

The second lemma is the only global selection principle needed in the
proof.

\begin{lem}[Three-state balancing]\label{lem:three-state-balancing}
Let \(U\) be a finite set and let \(\Sigma\) be a three-element set.  For
each ordered pair of distinct elements \(u,v\in U\), let
\[
  \beta_{uv}:\Sigma\times\Sigma\longrightarrow\Ftwo
\]
satisfy
\begin{align}
  \beta_{uv}(r,s)&=\beta_{vu}(s,r),
    &&(r,s\in\Sigma),\label{eq:balancing-symmetry}\\
  \sum_{s\in\Sigma}\beta_{uv}(r,s)&=0
    &&(r\in\Sigma).\label{eq:balancing-row}
\end{align}
For an assignment \(\sigma:U\to\Sigma\), write
\(\sigma_u=\sigma(u)\).  Then the number of assignments satisfying
\begin{equation}\label{eq:balancing-equations}
  \sum_{\substack{v\in U\\v\ne u}}
  \beta_{uv}(\sigma_u,\sigma_v)=0
  \qquad(u\in U)
\end{equation}
is odd.  In particular, such an assignment exists.
\end{lem}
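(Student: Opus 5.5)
The plan is to compute the number of solutions modulo \(2\) by an inclusion–exclusion expansion, and to show that every term except the trivial one vanishes. Write \(f_u(\sigma)=\sum_{v\ne u}\beta_{uv}(\sigma_u,\sigma_v)\). Over \(\Ftwo\), the indicator of a solution is \(\prod_{u\in U}(1+f_u(\sigma))\). Hence the number of solutions is congruent modulo \(2\) to
\[
  \sum_{S\subseteq U}\ \sum_{\sigma}\ \prod_{u\in S}f_u(\sigma).
\]
The term \(S=\emptyset\) contributes \(3^{\card U}\), which is odd. It therefore suffices to show that the inner sum vanishes for every nonempty \(S\).

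Fix a nonempty \(S\) and expand each \(f_u\). The product becomes a sum over maps \(\varphi:S\to U\) with \(\varphi(u)\ne u\) of the quantities
\[
  A_\varphi=\sum_\sigma\prod_{u\in S}\beta_{u\varphi(u)}(\sigma_u,\sigma_{\varphi(u)}).
\]
View \(\varphi\) as a functional digraph on \(S\cup\varphi(S)\) with arcs \(u\to\varphi(u)\).

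\textbf{Step 1: kill vertices with no incoming arc.} Suppose some \(u\in S\) has no preimage under \(\varphi\). Then \(\sigma_u\) occurs in exactly one factor, \(\beta_{u\varphi(u)}(\sigma_u,\sigma_{\varphi(u)})\). Summing over \(\sigma_u\) first gives
\[
  \sum_r\beta_{u\varphi(u)}(r,s)=\sum_r\beta_{\varphi(u)u}(s,r)=0,
\]
by \cref{eq:balancing-symmetry} and \cref{eq:balancing-row}. Hence \(A_\varphi=0\).

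\textbf{Step 2: reduce to fixed-point-free permutations.} Assume every vertex of \(S\) has indegree at least one. Every vertex of \(\varphi(S)\setminus S\) also has indegree at least one, and there are only \(\card S\) arcs. So \(\varphi(S)\subseteq S\), and \(\varphi\) is a fixed-point-free permutation of \(S\). Then \(A_\varphi\) factorizes over the cycles of \(\varphi\). For a cycle \(u_1\to\cdots\to u_k\to u_1\), let \(M_{uv}\) be the \(3\times3\) matrix of \(\beta_{uv}\). The cycle's factor is \(\operatorname{tr}(M_{u_1u_2}\cdots M_{u_ku_1})\). By \cref{eq:balancing-symmetry}, \(M_{vu}=M_{uv}^{\mathsf T}\). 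Reversing the cycle therefore gives the trace of the product of the transposes in reverse order, which is the trace of the transpose, i.e.\ the same value.

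\textbf{Step 3: pair off the remaining terms.} Define an involution on fixed-point-free permutations of \(S\) as follows: reverse the cycle of length at least \(3\) that contains the smallest element (under a fixed order on \(U\)) lying in such a cycle. This involution preserves \(A_\varphi\) by Step 2, so paired terms cancel modulo \(2\). Its fixed points are the permutations consisting only of \(2\)-cycles. For such a permutation, each factor is
\[
  \operatorname{tr}(M_{uv}M_{uv}^{\mathsf T})=\sum_{r,s}\beta_{uv}(r,s)^2=\sum_{r,s}\beta_{uv}(r,s)=0,
\]
by \cref{eq:balancing-row}. Since \(S\ne\emptyset\), there is at least one such factor, so these \(A_\varphi\) vanish too. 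This finishes the count, and oddness gives existence.

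The main obstacle is Step 3: a naive involution might fail to be well-defined, or might not preserve \(A_\varphi\). The choice above settles both points, because reversing a single cycle changes no other cycle and leaves its trace invariant. Some care is needed with the indexing when \(k\ge3\), and possibly also with loops in \(U\); but \(u\ne v\) is built in, so no fixed points of \(\varphi\) arise.
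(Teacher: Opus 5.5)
Your proof is correct and follows essentially the same route as the paper. You expand the parity of the solution count as \(\sum_{S\subseteq U}\sum_{\sigma}\prod_{u\in S}Q_u(\sigma)\), reduce each nonempty-\(S\) term via the functional digraph to fixed-point-free permutations, kill \(2\)-cycles by the row condition, and cancel longer cycles by reversal using \(M_{vu}=M_{uv}^{\mathsf T}\).

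The differences are minor variations in execution. You eliminate only the sources in \(S\) and finish with an in-degree count, where the paper kills every degree-one vertex and counts total degree. You also pair terms by reversing a single distinguished cycle of length at least three, rather than discarding permutations with a \(2\)-cycle and then pairing \(f\) with \(f^{-1}\).
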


\begin{proof}
All sums below are in \(\Ftwo\).  For \(\sigma\in\Sigma^U\), define
\[
  Q_u(\sigma)=
  \sum_{\substack{v\in U\\v\ne u}}
  \beta_{uv}(\sigma_u,\sigma_v).
\]
If \(N\) is the number of assignments satisfying
\cref{eq:balancing-equations}, write \(\overline N\in\Ftwo\) for its
parity.  Then
\begin{align}
  \overline N
  &=
  \sum_{\sigma\in\Sigma^U}
  \prod_{u\in U}\bigl(1+Q_u(\sigma)\bigr)\notag\\
  &=
  \sum_{S\subseteq U} Z(S),
  \qquad
  Z(S)=
  \sum_{\sigma\in\Sigma^U}\prod_{u\in S}Q_u(\sigma).
  \label{eq:solution-parity}
\end{align}
Here \(1+Q_u(\sigma)\) is the indicator that \(Q_u(\sigma)=0\).
The term for the empty set is
\[
  Z(\varnothing)=3^{\card U}=1.
\]
We prove that \(Z(S)=0\) for every nonempty \(S\).

Expanding one summand from each \(Q_u\), \(u\in S\), gives
\begin{equation}\label{eq:function-expansion}
  Z(S)=
  \sum_{\substack{f:S\to U\\f(u)\ne u\ (u\in S)}} Z_f,
  \qquad
  Z_f=
  \sum_{\sigma\in\Sigma^U}
  \prod_{u\in S}
  \beta_{u f(u)}(\sigma_u,\sigma_{f(u)}).
\end{equation}
Draw an arc \(u\to f(u)\) for every \(u\in S\), and let
\(d_f(w)\) be the degree of \(w\) in the underlying undirected
multigraph, with multiplicities retained.  If \(d_f(w)=1\), the state
\(\sigma_w\) occurs in exactly one factor of the product defining
\(Z_f\).  Summing first over this state gives zero by
\cref{eq:balancing-row}; when \(w\) is the source of its unique arc, use
\cref{eq:balancing-symmetry} first.  Hence \(Z_f=0\) whenever the
functional digraph has a vertex of degree one.

Suppose now that it has no such vertex, and let
\(
  U_f=S\cup f(S)
\)
be the set of vertices incident with an arc.  Every vertex of \(U_f\) has
degree at least two.  Since there are \(\card S\) arcs and
\(S\subseteq U_f\),
\[
  2\card S
  =\sum_{w\in U_f}d_f(w)
  \ge2\card{U_f}
  \ge2\card S.
\]
Equality holds throughout.  Thus \(U_f=S\) and every vertex of \(S\) has
degree two.  Each already has out-degree one, so it also has in-degree one;
hence \(f\) is a fixed-point-free permutation of \(S\).

The variables outside \(S\) contribute
\(3^{\card{U\setminus S}}=1\), and the remaining sum factors over the
cycles of \(f\).  A two-cycle \(u\leftrightarrow v\) contributes
\begin{align*}
  \sum_{r,s\in\Sigma}
  \beta_{uv}(r,s)\beta_{vu}(s,r)
  &=\sum_{r,s\in\Sigma}\beta_{uv}(r,s)^2\\
  &=\sum_{r,s\in\Sigma}\beta_{uv}(r,s)=0,
\end{align*}
by \cref{eq:balancing-symmetry,eq:balancing-row}.  Thus \(Z_f=0\) if
\(f\) has a two-cycle.  Every remaining permutation has all cycles of
length at least three and is distinct from its inverse.  Reversing the
cycles and using \cref{eq:balancing-symmetry} gives
\(Z_f=Z_{f^{-1}}\), so these terms cancel in inverse pairs.  Therefore
\(Z(S)=0\) for every nonempty \(S\).  \Cref{eq:solution-parity} now gives
\(\overline N=1\), as required.
\end{proof}

\section{Proof of the cyclic-word theorem}\label{sec:word-proof}

\begin{proof}[Proof of \cref{thm:word-colouring}]
Choose a representative
\(\mathbf w=(w_0,\ldots,w_{n-1})\) and use the linear order
\(0<1<\cdots<n-1\).  Put
\[
  \Om=\Ftwo^2\setminus\{0\}
\]
and define
\[
  \rho:\Ftwo^2\longrightarrow\Ftwo^2,
  \qquad
  \rho(r,s)=(s,r+s).
\]
The map \(\rho\) has order three and cyclically permutes the three elements
of \(\Om\).  In particular,
\begin{equation}\label{eq:omega-sum}
  \sum_{t\in\Om}t=0.
\end{equation}

\medskip\noindent\emph{Local patterns.}
Fix a letter \(a\), and write
\(I_a=\{i_1<i_2<\cdots<i_k\}\), where \(k\ge2\).  For each
\(t\in\Om\), define
\(\Delta_{a,t}:I_a\to\Om\) as follows.  If \(k\) is even, put
\(\Delta_{a,t}(i_j)=t\) for every \(j\).  If \(k\) is odd, put
\[
  \Delta_{a,t}(i_j)=
  \begin{cases}
    t,       & 1\le j\le k-2,\\
    \rho t,  & j=k-1,\\
    \rho^2t, & j=k.
  \end{cases}
\]

These patterns satisfy
\begin{align}
  \Delta_{a,t}(i)&\ne0,
    &&(i\in I_a,\ t\in\Om),\label{eq:pattern-nonzero}\\
  \sum_{i\in I_a}\Delta_{a,t}(i)&=0,
    &&(t\in\Om),\label{eq:pattern-letter-sum}\\
  \sum_{t\in\Om}\Delta_{a,t}(i)&=0,
    &&(i\in I_a).\label{eq:pattern-state-sum}
\end{align}
The first identity holds by construction.  For the second, an even number of copies
of \(t\) cancels when \(k\) is even.  When \(k\) is odd, the first
\(k-3\) copies cancel in pairs and the final three values
\(t,\rho t,\rho^2t\) sum to zero.  For fixed \(i\), the map
\(t\mapsto\Delta_{a,t}(i)\) is one of
\(1,\rho,\rho^2\) on \(\Om\), so
\cref{eq:pattern-state-sum} follows from \cref{eq:omega-sum}.

\medskip\noindent\emph{Interactions.}
For distinct letters \(a,b\in A\), define
\[
  \beta_{ab}(t,s)=
  \sum_{\substack{i\in I_a,\ j\in I_b\\j<i}}
  \pair{\Delta_{a,t}(i)}{\Delta_{b,s}(j)}
  \qquad(t,s\in\Om).
\]
These functions satisfy the hypotheses of
\cref{lem:three-state-balancing}.  Indeed, every pair
\((i,j)\in I_a\times I_b\) occurs in exactly one of the two orders
\(j<i\) and \(i<j\).  Hence symmetry and bilinearity give
\begin{align*}
  \beta_{ab}(t,s)+\beta_{ba}(s,t)
  &=
  \sum_{i\in I_a}\sum_{j\in I_b}
  \pair{\Delta_{a,t}(i)}{\Delta_{b,s}(j)}\\
  &=
  \pair{\sum_{i\in I_a}\Delta_{a,t}(i)}
        {\sum_{j\in I_b}\Delta_{b,s}(j)}=0
\end{align*}
by \cref{eq:pattern-letter-sum}.  For fixed \(t\),
\[
  \sum_{s\in\Om}\beta_{ab}(t,s)
  =
  \sum_{\substack{i\in I_a,\ j\in I_b\\j<i}}
  \pair{\Delta_{a,t}(i)}
        {\sum_{s\in\Om}\Delta_{b,s}(j)}
  =0
\]
by \cref{eq:pattern-state-sum}.

Apply \cref{lem:three-state-balancing} with \(U=A\) and
\(\Sigma=\Om\).  It supplies a state \(t_a\in\Om\) for every letter
\(a\) such that
\begin{equation}\label{eq:balanced-interactions}
  \sum_{\substack{b\in A\\b\ne a}}
  \beta_{ab}(t_a,t_b)=0
  \qquad(a\in A).
\end{equation}
For \(i\in I_a\), set
\(
  y_i=\Delta_{a,t_a}(i).
\)
\Cref{eq:pattern-nonzero,eq:pattern-letter-sum} give
\begin{equation}\label{eq:difference-properties}
  y_i\ne0\quad(0\le i<n),
  \qquad
  \sum_{i\in I_a}y_i=0\quad(a\in A),
\end{equation}
while \cref{eq:balanced-interactions} says
\begin{equation}\label{eq:cross-term-zero}
  \sum_{\substack{b\in A\\b\ne a}}
  \sum_{\substack{i\in I_a,\ j\in I_b\\j<i}}
  \pair{y_i}{y_j}=0
  \qquad(a\in A).
\end{equation}

\medskip\noindent\emph{Integration.}
Since the occurrence sets partition the positions,
\cref{eq:difference-properties} implies
\(\sum_{i=0}^{n-1}y_i=0\).  Set \(x_{-1}=0\) and define
\begin{equation}\label{eq:prefix-colours}
  x_i=\sum_{j=0}^i y_j
  \qquad(0\le i<n).
\end{equation}
Then \(x_{n-1}=x_{-1}=0\), so these colours close around the word, and
\[
  x_{i-1}+x_i=y_i\ne0.
\]
Thus the two gaps incident with every occurrence have different colours.

Fix a letter \(a\).  Its gap incidences carry the
\(2\card{I_a}\) colours \(x_{i-1},x_i\), for \(i\in I_a\), and their sum is
\begin{equation}\label{eq:incident-linear-sum}
  \sum_{i\in I_a}(x_{i-1}+x_i)
  =\sum_{i\in I_a}y_i=0.
\end{equation}
Using \cref{eq:polarization,eq:prefix-colours}, then splitting the strict
prefix sum according to the occurrence sets, gives
\begin{align}
  &\sum_{i\in I_a}\bigl(q(x_{i-1})+q(x_i)\bigr)\notag\\
  &\quad=
  \sum_{i\in I_a}q(y_i)
  +
  \sum_{\substack{i\in I_a\\0\le j<i}}\pair{y_i}{y_j}\notag\\
  &\quad=
  q\left(\sum_{i\in I_a}y_i\right)
  +
  \sum_{\substack{b\in A\\b\ne a}}
  \sum_{\substack{i\in I_a,\ j\in I_b\\j<i}}
  \pair{y_i}{y_j}
  =0.\label{eq:incident-quadratic-sum}
\end{align}
The second equality is \cref{eq:finite-polarization}; the last uses
\cref{eq:difference-properties,eq:cross-term-zero}.  The family of gap
incidence colours at \(a\) has even cardinality, zero sum by
\cref{eq:incident-linear-sum}, and zero quadratic sum by
\cref{eq:incident-quadratic-sum}.  By
\cref{lem:four-colour-parity}, every colour occurs in it an even number of
times.  Since \(a\) was arbitrary, the required gap colouring exists.
\end{proof}

Together with the reduction in \cref{sec:reduction}, this completes the
proof of \cref{thm:sabidussi}.

\bibliographystyle{alpha}
\bibliography{sabidussi}

\end{document}